\theoremstyle{plain}
\newtheorem{thm}{\protect\theoremname}
\theoremstyle{plain}
\newtheorem{question}[thm]{\protect\questionname}
\theoremstyle{plain}
\newtheorem{lem}[thm]{\protect\lemmaname}
\theoremstyle{remark}
\newtheorem{rem}[thm]{\protect\remarkname}
\theoremstyle{plain}
\newtheorem{cor}[thm]{\protect\corollaryname}
\providecommand{\corollaryname}{Corollary}
\providecommand{\lemmaname}{Lemma}
\providecommand{\questionname}{Question}
\providecommand{\remarkname}{Remark}
\providecommand{\theoremname}{Theorem}
\begin{document}
\title{Random centers of localization for random operators}
\author{Raphael Ducatez}
\maketitle
\begin{abstract}
We propose a new random process to construct the eigenvectors of some
random operators which make a short and clean connection with the
resolvent. In this process the center of localization has to be chosen
randomly.
\end{abstract}

\section{Introduction}

We consider a matrix $H\in\mathbb{R}^{|\Lambda|\times|\Lambda|}$,
$\Lambda$ a finite set, defined as
\[
H=T+V
\]
where $T$ is a fixed symmetric matrix and $V=\text{diag}((v_{x})_{x\in\Lambda})$
is a diagonal matrix whose entries $(v_{x})$ are random, independent
and with law $\rho_{x}$ and we are interested in the following question. 
\begin{question}
\label{que:motivation}Assuming $\lambda$ is an eigenvalue of $H$
what does the eigenvector $\phi_{\lambda}$ look like ? 
\end{question}

Such a question were first asked by P. Anderson \cite{anderson1958absence}
who predicted that in the tight binding model with high disorder the
eigenvectors should be localized and since then the subject has been
very active field of research in physics. A rigorous proof of the
now called Anderson localization has been first given in \cite{kunz1980spectre}
for the random Schrodinger operator in one dimension and then in \cite{frohlich1983absence,aizenman1993localization}
in any dimension with high disorder. See also the books \cite{carmona2012spectral}
and \cite{aizenman2015random} for an introduction of the topics and
the many related questions (dynamical localization, local statistic
of the eigenvalues, delocalization or trees,...). 

The present paper is motivated by the following remarks : In the literature
the mechanics of the proofs heavily relied on the resolvent $(H-z)^{-1}$,
$z\in\mathbb{C}$. The reason is that one can use the resolvent formula,
which allows a lot of algebraic computations to obtain very good estimates
on $(H-z)^{-1}$. On the others hand the results on the eigenvectors
are mostly qualitative. The difficulty here is that eigenvalue $\lambda$
is random itself and then the associated eigenvector $\phi_{\lambda}$
is not as easy to manipulate as the resolvent. In these notes, our
main result is to give a new random process to construct the eigenvector
which make a clear and direct connection with the resolvent. The key
input is to choose the center of localization randomly. Such a process
already appeared in \cite{rifkind2018eigenvectors} and \cite{ducatez2019forward}
but in a less general statement. 

.

\section{Two random constructions of the eigenvectors}

We assume that $T$ and $V$ are such that 
\begin{enumerate}
\item The law of the $(v_{x})$ are absolutely continuous,
\item $\text{Spec}(H)\subset[0,1]$ a.s,
\item The eigenvalues are non degenerate a.s.
\end{enumerate}
For the second condition one should just think $T$ and $V$ as bounded
matrices that has been re-scaled for convenience. The third point
actually follows from Minami estimate \cite{minami1996local}. We
add it just to make sure that the definition of $\phi_{\lambda}$
the eigenvector associated to an eigenvalue $\lambda$ is not ambiguous
(up to a phase). Then the first condition is by far the most critical
one.

We propose two random processes on $\Omega=\mathbb{R}^{\Lambda}\times[0,1]\times\mathbb{S}^{\Lambda-1}\times\Lambda$
that we will denote $\mu_{1}$ and $\mu_{2}$ and where we denote
$\mathbb{S}^{\Lambda-1}=\{u\in\mathbb{R}^{\Lambda}:\|u\|_{L^{2}}=1\}$.

\subsubsection*{Construction of $\mu_{1}$ : }

The law $\mu_{1}$ should be seen as the standard way to construct
an eigenvector and it is the process we are interested in.
\begin{enumerate}
\item Draw the random diagonal $V$ (with law $\otimes_{x\in\Lambda}\rho_{x}$)
\item Diagonalize $H=T+V$, choose a random eigenvalue $\lambda\in\text{Spec}(H)$
uniformly and denote $\phi_{\lambda}$ its corresponding eigenvector. 
\item Choose a random point $x^{*}\in\Lambda$ with the conditional law
$\mathbb{P}(x^{*}=x|V,\lambda,\phi_{\lambda})=|\phi_{\lambda}(x)|^{2}$.
We will call $x^{*}$ a \emph{random center} of $\phi_{\lambda}$.
\end{enumerate}
We have then obtained $(V,\lambda,\phi_{\lambda},x^{*})\in\Omega$
and $\mu_{1}$ is the law of this construction. Remark that if $\phi_{\lambda}$
is localized, the point $x^{*}$ should be a good guess of its domain
of localization, which is the reason we also refer $x^{*}$ as the
\emph{center of localization} of $\phi_{\lambda}$. 

We now construct the process $\mu_{2}$, but first we will need the
following very standard Lemma (see for example \cite[Theorem 5.3]{aizenman2015random}).
For $x\in\Lambda$, we denote $H_{v_{x}=0}$ such that $H=H_{v_{x}=0}+v_{x}1_{x}1_{x}^{*}$.
\begin{lem}
\label{lem:One-to-One}For any $\lambda\in\mathbb{R}$, $x\in\Lambda$
and $(v_{y})\in\mathbb{R}^{\Lambda\setminus\{x\}}$ there exists at
most a unique $v_{x}\in\mathbb{R}$ such that $\lambda$ is an eigenvalue
of $T+V$ with $\phi_{\lambda}(x)\neq0$. Moreover 
\[
v_{x}=-\frac{1}{(H_{v_{x}=0}-\lambda)_{xx}^{-1}}\quad\text{and}\quad\phi_{\lambda}=\frac{(H_{v_{x}=0}-\lambda)^{-1}1_{x}}{\|(H{}_{v_{x}=0}-\lambda)^{-1}1_{x}\|}.
\]
\end{lem}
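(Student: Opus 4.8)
The plan is to understand the statement as a rank-one perturbation computation. Writing $H = H_{v_x=0} + v_x 1_x 1_x^*$, the eigenvalue equation $(H-\lambda)\phi_\lambda = 0$ becomes
\[
(H_{v_x=0}-\lambda)\phi_\lambda = -v_x \phi_\lambda(x)\, 1_x.
\]
First I would treat the case $\phi_\lambda(x)\neq 0$, which is exactly the hypothesis. In that case I claim $(H_{v_x=0}-\lambda)$ must be invertible: if it were singular, any vector in its kernel would (after checking it is not supported away from $x$) force a contradiction, but more cleanly, if $\lambda$ were an eigenvalue of $H_{v_x=0}$ as well, then subtracting the two eigenvalue equations would show that $1_x$ lies in the range constraints in an incompatible way. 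I would argue instead directly: since we are told $\lambda$ is an eigenvalue of $H=T+V$, and we want to solve for $\phi_\lambda$, I can apply $(H_{v_x=0}-\lambda)^{-1}$ to the displayed equation \emph{provided} the inverse exists, which I will justify as part of the argument.

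The core step is the resolvent manipulation. Applying the (to-be-justified) inverse gives
\[
\phi_\lambda = -v_x \phi_\lambda(x)\,(H_{v_x=0}-\lambda)^{-1} 1_x,
\]
so $\phi_\lambda$ is proportional to $(H_{v_x=0}-\lambda)^{-1}1_x$. Since $\phi_\lambda$ is a unit vector (up to phase), this immediately yields the stated formula
\[
\phi_\lambda = \frac{(H_{v_x=0}-\lambda)^{-1}1_x}{\|(H_{v_x=0}-\lambda)^{-1}1_x\|}.
\]
To extract $v_x$, I would take the inner product of the previous displayed identity with $1_x$, i.e. evaluate at coordinate $x$:
\[
\phi_\lambda(x) = -v_x \phi_\lambda(x)\,(H_{v_x=0}-\lambda)^{-1}_{xx}.
\]
Dividing by $\phi_\lambda(x)\neq 0$ gives $1 = -v_x (H_{v_x=0}-\lambda)^{-1}_{xx}$, hence the formula $v_x = -1/(H_{v_x=0}-\lambda)^{-1}_{xx}$, which also shows the diagonal resolvent entry is nonzero.

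For uniqueness I would observe that both $v_x$ and the direction of $\phi_\lambda$ are determined by $\lambda$, $x$, and the off-diagonal data $(v_y)_{y\neq x}$ through these explicit formulas, so at most one $v_x$ can work. The step I expect to be the main obstacle, or at least the one requiring care, is justifying that $(H_{v_x=0}-\lambda)$ is invertible under the hypothesis $\phi_\lambda(x)\neq 0$: one must rule out the degenerate possibility that $\lambda\in\mathrm{Spec}(H_{v_x=0})$. The clean way is to note that if $(H_{v_x=0}-\lambda)$ were singular with kernel vector $\psi$, then pairing the eigenvalue equation against $\psi$ gives $0 = \langle (H_{v_x=0}-\lambda)\phi_\lambda,\psi\rangle = -v_x\phi_\lambda(x)\langle 1_x,\psi\rangle$, forcing $\psi(x)=0$; then $\psi$ is also an eigenvector of $H$ at $\lambda$ supported away from $x$, and since eigenvalues are non-degenerate a.s.\ (assumption 3) this would make $\psi$ parallel to $\phi_\lambda$, contradicting $\phi_\lambda(x)\neq 0$. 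This resolves the invertibility and the whole argument goes through.
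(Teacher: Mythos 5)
Your proof is correct and follows essentially the same rank-one computation as the paper's: rewrite $(H-\lambda)\phi_\lambda=0$ as $(H_{v_x=0}-\lambda)\phi_\lambda=-v_x\phi_\lambda(x)1_x$, apply the resolvent to identify the direction of $\phi_\lambda$, and evaluate at the coordinate $x$ to solve for $v_x$. The only difference is that you additionally justify the invertibility of $(H_{v_x=0}-\lambda)$ via a kernel-pairing argument, a step the paper's proof takes for granted; just note that your pairing gives no information when $v_x=0$ (where $\lambda\in\mathrm{Spec}(H_{v_x=0})$ and the displayed formulas genuinely degenerate), an edge case the paper silently ignores as well and which is null in the measure-theoretic setting where the lemma is used.
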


\begin{proof}
[Proof of Lemma \ref{lem:One-to-One}]If such a $v_{x}$ exists, we
have $(H_{v_{x}=0}-\lambda+v_{x}1_{x}1_{x}^{*})\phi_{\lambda}=0$
and then 
\[
(H_{v_{x}=0}-\lambda)\phi_{\lambda}=-v_{x}\phi_{\lambda}(x)1_{x}
\]
so that we have 
\[
\phi_{\lambda}=-v_{x}\phi_{\lambda}(x)(H{}_{v_{x}=0}-\lambda)^{-1}1_{x}.
\]
Here $v_{x}\phi_{\lambda}(x)\in\mathbb{R}$ and can be obtained using
the normalization $\|\phi_{\lambda}\|=1$. Moreover, $\phi_{\lambda}(x)=-v_{x}\phi_{\lambda}(x)(H{}_{v_{x}=0}-\lambda)_{xx}^{-1}$
which finishes the proof of the Lemma.
\end{proof}

\subsubsection*{Construction of $\mu_{2}$ : }

The second process $\mu_{2}$ is also natural and has a clear connection
with the resolvent. It will be a very useful tool for the study of
$\mu_{1}$.
\begin{enumerate}
\item Draw $\lambda\in[0,1]$ randomly with uniform law. 
\item Choose a random point $x^{*}\in\Lambda$ with uniform law. 
\item Draw the random diagonal entries of $V$ except for $v_{x^{*}}$ (with
law $\otimes_{y\in\Lambda\setminus\{x^{*}\}}\rho_{y}$).
\item Construct $v_{x^{*}}$ and $\phi_{\lambda}$ as in Lemma \ref{lem:One-to-One}
such that $\lambda$ is an eigenvalue of $H=T+V$.
\end{enumerate}
We then obtain $((v_{y})_{y\neq x^{*}},v_{x^{*}}),\lambda,\phi_{\lambda},x^{*})\in\Omega$
and denote $\mu_{2}$ the law of this construction. We can now state
the main result of this paper.
\begin{thm}
\label{lem:Radon-Nikodym}We have the following Radon-Nikodym derivative
\[
\frac{d\mu_{1}}{d\mu_{2}}(V,\lambda,\phi_{\lambda},x^{*})=\frac{d\rho_{x^{*}}}{dv_{x^{*}}}(v_{x^{*}})
\]
\end{thm}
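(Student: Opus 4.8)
The plan is to test both measures against an arbitrary bounded measurable $f$ on $\Omega$ and to verify the identity $\int f\, d\mu_1 = \int f(V,\lambda,\phi_\lambda,x^*)\,\frac{d\rho_{x^*}}{dv_{x^*}}(v_{x^*})\, d\mu_2$, which is equivalent to the asserted Radon--Nikodym derivative. Unpacking the three steps defining $\mu_1$, I would write
\[
\int f\, d\mu_1 = \frac{1}{|\Lambda|}\sum_{x\in\Lambda}\sum_{i}\int \Big(\bigotimes_{y}\rho_y(v_y)\,dv_y\Big)\,|\phi_{\lambda_i}(x)|^2\, f(V,\lambda_i,\phi_{\lambda_i},x),
\]
where the inner sum runs over the (a.s. simple) eigenvalues $\lambda_i$ of $H=T+V$, the factor $1/|\Lambda|$ comes from the uniform choice of eigenvalue, and the weight $|\phi_{\lambda_i}(x)|^2$ from the conditional law of the center $x^*=x$.

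For each fixed $x$ and each fixed configuration $(v_y)_{y\neq x}$, the core of the argument is to change variables from $v_x$ to the eigenvalue. First-order perturbation theory (Hellmann--Feynman) for the rank-one family $v_x\mapsto H_{v_x=0}+v_x 1_x 1_x^*$ gives, along each simple eigenvalue branch,
\[
\frac{d\lambda_i}{dv_x}=\langle \phi_{\lambda_i}, 1_x 1_x^* \phi_{\lambda_i}\rangle = |\phi_{\lambda_i}(x)|^2 .
\]
Since this derivative is nonnegative, each branch $v_x\mapsto\lambda_i(v_x)$ is monotone and defines a legitimate substitution $dv_x = d\lambda_i/|\phi_{\lambda_i}(x)|^2$ on the set where $\phi_{\lambda_i}(x)\neq 0$. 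The decisive point is that the Jacobian $|\phi_{\lambda_i}(x)|^2$ cancels exactly the weight $|\phi_{\lambda_i}(x)|^2$ produced by the random center, so after the substitution the integrand no longer contains the eigenvector weight and one is left with $\frac{d\rho_x}{dv_x}(v_x(\lambda))\,f(V,\lambda,\phi_\lambda,x)\,d\lambda$.

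Next I would invoke Lemma \ref{lem:One-to-One} to reassemble the branches: for $(v_y)_{y\neq x}$ fixed and each target $\lambda$ with $\phi_\lambda(x)\neq0$ there is exactly one pair (branch index, value $v_x$), so summing the substituted integrals over $i$ collapses to a single integral in $\lambda$, and the lemma identifies $v_x=v_x(\lambda)$ and $\phi_\lambda$ as precisely the quantities constructed in step 4 of $\mu_2$. Because $\mathrm{Spec}(H)\subset[0,1]$ almost surely, the contributions with $\lambda\notin[0,1]$ vanish (the corresponding $v_x(\lambda)$ lies off the support of $\rho_x$, where its density is zero), so the effective range is $\lambda\in[0,1]$. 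Writing out $\int f\,\frac{d\rho_{x^*}}{dv_{x^*}}(v_{x^*})\,d\mu_2$ directly from the definition of $\mu_2$ (uniform $\lambda$ on $[0,1]$, uniform $x^*$, and $(v_y)_{y\neq x^*}\sim\otimes_{y\neq x^*}\rho_y$) and applying Fubini produces the identical expression, completing the proof.

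The step I expect to be the main obstacle is the rigorous justification of this global change of variables: establishing the Hellmann--Feynman derivative with the correct normalization for simple eigenvalues, controlling the set $\{\phi_\lambda(x)=0\}$ (measure zero by the absolute continuity of the $\rho_x$) where the monotone substitution degenerates, and checking via Lemma \ref{lem:One-to-One} that summing over eigenvalue branches tiles the $\lambda$-axis without overlap or omission, so that the branch sum becomes a clean single integral over $[0,1]$.
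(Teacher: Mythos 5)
Your proposal is correct and follows essentially the same route as the paper's proof: test against an arbitrary $f$, decompose $\mathbb{E}_{\mu_1}(f)$ over centers $x^*$ and eigenvalue branches, use the Hellmann--Feynman identity $\frac{\partial\lambda_i}{\partial v_{x^*}}=|\phi_{\lambda_i}(x^*)|^2$ to change variables $v_{x^*}\to\lambda_i$ so that the Jacobian cancels the random-center weight, and reassemble the branches into a single integral over $[0,1]$ matching $\mathbb{E}_{\mu_2}(f\,\frac{d\rho_{x^*}}{dv_{x^*}})$. The one step you flag as the main obstacle --- that the branch images tile $[0,1]$ without overlap or omission --- is exactly where the paper inserts Cauchy interlacing ($\lambda_i\in[\lambda_{i-1}^{(x^*)},\lambda_i^{(x^*)}]$, with the conventions $\lambda_0^{(x^*)}=0$ and $\lambda_{|\Lambda|}^{(x^*)}=1$ justified by $\mathrm{Spec}(H)\subset[0,1]$ a.s., for which you give the equivalent support argument), so your argument via Lemma \ref{lem:One-to-One} plus monotonicity closes in the same way.
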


\begin{proof}
[Proof of Theorem \ref{lem:Radon-Nikodym}]Let $f:\Omega\rightarrow\mathbb{R}$
a test function and denoting $\omega=(V,\lambda,\phi_{\lambda},x^{*})$
we have 
\begin{align*}
\mathbb{E}_{\mu_{1}}(f(\omega)) & =\frac{1}{|\Lambda|}\int_{\mathbb{R}^{\Lambda}}\sum_{\lambda\in\text{Spec}(H)}\sum_{x^{*}\in\Lambda}f(\omega)|\phi_{\lambda}(x^{*})|^{2}\prod_{y\in\Lambda}d\rho_{y}(v_{y})\\
 & =\frac{1}{|\Lambda|}\sum_{x^{*}\in\Lambda}\sum_{i=1}^{|\Lambda|}\int_{\mathbb{R}^{\Lambda\setminus\{x^{*}\}}}\int_{\mathbb{R}}f(\omega)|\phi_{\lambda_{i}}(x^{*})|^{2}d\rho_{x^{*}}(v_{x^{*}})\prod_{y\in\Lambda\setminus\{x^{*}\}}d\rho_{y}(v_{y})
\end{align*}
where we denote $\lambda_{1}\leq\cdots\leq\lambda_{|\Lambda|}$ the
eigenvalues of $H$. We also denote $\lambda_{1}^{(x^{*})}\leq\cdots\leq\lambda_{|\Lambda|-1}^{(x^{*})}$
the eigenvalues of the restricted matrix $H|_{\Lambda\setminus\{x^{*}\}}$
and by interlacing we have $\lambda_{i}\in[\lambda_{i-1}^{(x^{*})},\lambda_{i}^{(x^{*})}]$
for all $1\leq i\leq|\Lambda|$ where $\lambda_{0}^{(x^{*})}=0$ and
$\lambda_{|\Lambda|}^{(x^{*})}=1$. With $(v_{y})_{y\neq x^{*}}$
fixed, the eigenvalues can be seen as functions of $v_{x^{*}}$ :
$\lambda_{i}=\lambda_{i}(v_{x^{*}})$ and we make the following change
of variable $v_{x^{*}}\rightarrow\lambda_{i}$. Because $\frac{\partial\lambda_{i}}{\partial v_{x^{*}}}=|\phi_{\lambda_{i}}(x^{*})|^{2}$
we obtain 
\begin{align*}
\mathbb{E}_{\mu_{1}}(f(\omega)) & =\frac{1}{|\Lambda|}\sum_{x^{*}\in\Lambda}\sum_{i=1}^{|\Lambda|}\int_{\mathbb{R}^{\Lambda\setminus\{x^{*}\}}}\left(\int_{\lambda_{i-1}^{(x^{*})}}^{\lambda_{i}^{(x^{*})}}f(\omega)\frac{d\rho_{x^{*}}}{dv_{x^{*}}}d\lambda\right)\prod_{y\in\Lambda\setminus\{x^{*}\}}d\rho_{y}(v_{y}).\\
 & =\frac{1}{|\Lambda|}\sum_{x^{*}\in\Lambda}\int_{\mathbb{R}^{\Lambda\setminus\{x^{*}\}}}\int_{[0,1]}f(\omega)\frac{d\rho_{x^{*}}}{dv_{x^{*}}}d\lambda\prod_{y\in\Lambda\setminus\{x^{*}\}}d\rho_{y}(v_{y})\\
 & =\mathbb{E}_{\mu_{2}}\left(f(\omega)\frac{d\rho_{x^{*}}}{dv_{x^{*}}}\right).
\end{align*}
\end{proof}
\begin{rem}
The hypothesis the entries $(v_{x})$ are independent can be removed
if we replace $\frac{d\rho_{x^{*}}}{dv_{x^{*}}}$ by the law of $v_{x^{*}}$
conditionally on $(v_{y})_{y\neq x^{*}}$. The proof is exactly the
same in that case.
\end{rem}

Theorem \ref{lem:Radon-Nikodym} propose an interesting answer to
Question \ref{que:motivation}. Indeed, conditionally on $\lambda$,
steps 2-4 in the construction of $\mu_{2}$ gives a recipe to construct
$\phi_{\lambda}$. The answer here is that we have a direct link between
the resolvent and the eigenvector but with one particular feature
: in order to be able to use Lemma \ref{lem:One-to-One} the starting
point $x^{*}\in\Lambda$ has to be chosen randomly. 
\begin{rem}
We denote $\|\rho\|_{\infty}=\sup_{x\in\Lambda}\|\frac{d\rho_{x}}{dv_{x}}\|_{\infty}$.
Then for any $A\subset\Omega$
\[
\mu_{1}(A)\leq\|\rho\|_{\infty}\mu_{2}(A).
\]
In particular, if $\|\rho\|_{\infty}<\infty$, any event $A$ that
occurs with high probability for $\mu_{2}$ (ie $\mu_{2}(\Omega\setminus A)\ll1$)
occurs with high probability for $\mu_{1}$.
\end{rem}

\section{Applications}

We now state a few consequences of Theorem \ref{lem:Radon-Nikodym}. 

\subsection{Law of the eigenvalues and eigenvectors}

We denote $\mathbb{E}_{\neq x}$ the mean on all the entries of $V$
but $x$ and similarly $\mathbb{P}_{\neq x}$ the probability
\[
\mathbb{E}_{\neq x}(\cdots)=\int_{\mathbb{R}^{\Lambda\setminus\{x\}}}\cdots\prod_{y\ne x}d\rho(v_{y})\quad\text{and}\quad\mathbb{P}_{\neq x}(A)=\mathbb{E}_{\neq x}(1_{A})\text{ for all }A.
\]

The first point of the following Corollary is a standard result which
is usually proved considering the limit $\lim_{\epsilon\rightarrow0^{+}}\mathbb{E}\left(\frac{1}{|\Lambda|}\Im\text{Tr}((H-\lambda+i\epsilon)^{-1})\right)$
(see for example \cite[Chapter 4]{aizenman2015random}). We claim
that it is just a particular case of Theorem \ref{lem:Radon-Nikodym}
which is much more general because it also gives the law for the eigenvector.
\begin{cor}
\label{cor:Eigenvector-law}
\begin{enumerate}
\item The density of state $\nu$ is given by 
\[
\frac{d\nu}{d\lambda}(\lambda)=\frac{1}{|\Lambda|}\sum_{x\in\Lambda}\mathbb{E}_{\neq x}\left[\frac{d\rho_{x}}{dv_{x}}\left(\frac{1}{(\lambda-H{}_{v_{x}=0})_{xx}^{-1}}\right)\right]
\]
In particular $\|\frac{d\nu}{d\lambda}\|_{\infty}\leq\|\rho\|_{\infty}$
(Wegner Estimate).
\item For any subset $U\subset\mathbb{S}^{\Lambda-1}$ 
\begin{align*}
 & \mathbb{P}(\phi_{\lambda}\in U|\lambda\in\text{Spec}(H))\\
 & =\frac{1}{|\Lambda|\frac{d\nu}{d\lambda}(\lambda)}\sum_{x\in\Lambda}\mathbb{E}_{\neq x}\left[1_{U}\left(\frac{(H_{v_{x}=0}-\lambda)^{-1}1_{x}}{\|(H{}_{v_{x}=0}-\lambda)^{-1}1_{x}\|}\right)\frac{d\rho_{x}}{dv_{x}}\left(\frac{1}{(\lambda-H{}_{v_{x}=0})_{xx}^{-1}}\right)\right]
\end{align*}
\end{enumerate}
\end{cor}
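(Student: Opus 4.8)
The plan is to deduce both statements directly from Theorem \ref{lem:Radon-Nikodym} by feeding it suitable test functions and then disintegrating with respect to $\lambda$. The starting observation is that the $\lambda$-marginal of $\mu_{1}$ is precisely the density of states $\nu$: by construction $\mu_{1}$ first draws $V$ and then selects $\lambda$ uniformly among the eigenvalues of $H$, so the law of $\lambda$ under $\mu_{1}$ equals $\mathbb{E}[\frac{1}{|\Lambda|}\sum_{\lambda\in\text{Spec}(H)}\delta_{\lambda}]=\nu$. Everything else is a matter of computing the $\mu_{2}$-side of the Radon-Nikodym identity using the explicit formulas of Lemma \ref{lem:One-to-One}.

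For the first point I would apply the identity of Theorem \ref{lem:Radon-Nikodym} to a test function depending only on $\lambda$, say $f(\omega)=g(\lambda)$. The left-hand side is then $\int g\,d\nu$. For the right-hand side I unfold the definition of $\mu_{2}$: $\lambda$ is uniform on $[0,1]$, $x^{*}$ is uniform on $\Lambda$, the off-center entries are drawn from their laws, and $v_{x^{*}}$ is the explicit function of $(\lambda,(v_{y})_{y\neq x^{*}})$ from Lemma \ref{lem:One-to-One}, namely $v_{x^{*}}=1/(\lambda-H_{v_{x^{*}}=0})_{x^{*}x^{*}}^{-1}$. This turns the right-hand side into $\frac{1}{|\Lambda|}\sum_{x}\int_{0}^{1}g(\lambda)\,\mathbb{E}_{\neq x}\!\left[\frac{d\rho_{x}}{dv_{x}}(v_{x})\right]d\lambda$. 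Since $g$ is arbitrary the integrands agree for almost every $\lambda$, which is exactly the claimed formula for $\frac{d\nu}{d\lambda}$. The Wegner bound is then immediate: each factor $\frac{d\rho_{x}}{dv_{x}}$ is bounded by $\|\rho\|_{\infty}$, and the prefactor $\frac{1}{|\Lambda|}$ normalizes the sum over $x$, so $\frac{d\nu}{d\lambda}(\lambda)\leq\|\rho\|_{\infty}$.

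For the second point the only new ingredient is to also record the eigenvector, so I would apply Theorem \ref{lem:Radon-Nikodym} to $f(\omega)=g(\lambda)1_{U}(\phi_{\lambda})$. On the $\mu_{1}$-side, disintegrating over the $\lambda$-marginal gives $\int_{0}^{1}g(\lambda)\,\mathbb{P}(\phi_{\lambda}\in U\mid\lambda\in\text{Spec}(H))\,\frac{d\nu}{d\lambda}(\lambda)\,d\lambda$. On the $\mu_{2}$-side, I substitute the explicit eigenvector of Lemma \ref{lem:One-to-One}, $\phi_{\lambda}=(H_{v_{x^{*}}=0}-\lambda)^{-1}1_{x^{*}}/\|(H_{v_{x^{*}}=0}-\lambda)^{-1}1_{x^{*}}\|$, together with the same expression for $v_{x^{*}}$ as above. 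Matching the two integrands in $\lambda$ and dividing through by $\frac{d\nu}{d\lambda}(\lambda)$ yields exactly the stated formula, with the factor $|\Lambda|\frac{d\nu}{d\lambda}(\lambda)$ appearing in the denominator.

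The algebraic manipulations are essentially bookkeeping; the one point requiring care is the disintegration step. Because $\lambda$ has a continuous law under both $\mu_{1}$ and $\mu_{2}$, the conditional probability $\mathbb{P}(\phi_{\lambda}\in U\mid\lambda\in\text{Spec}(H))$ must be read as the disintegration of $\mu_{1}$ along its $\lambda$-marginal, and the resulting equality of densities in $\lambda$ holds only for Lebesgue-almost every $\lambda$. I expect this measure-theoretic justification, rather than any of the resolvent computations, to be the main thing that must be stated cleanly.
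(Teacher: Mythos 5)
Your proposal is correct and follows essentially the same route as the paper: apply Theorem \ref{lem:Radon-Nikodym} to test functions $f(\omega)=g(\lambda)$ and $f(\omega)=g(\lambda)1_{U}(\phi_{\lambda})$, unfold the $\mu_{2}$-side via the explicit formulas of Lemma \ref{lem:One-to-One}, and identify densities in $\lambda$. Your closing remark on the almost-everywhere nature of the disintegration is a point the paper leaves implicit, but it does not change the argument.
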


\begin{proof}
[Proof of Corollary \ref{cor:Eigenvector-law}]With a test function
that only depends on $\lambda$, $f(\omega)=f(\lambda)$ we have
\begin{align*}
\mathbb{E}_{\mu_{1}}(f(\lambda)) & =\mathbb{E}_{\mu_{2}}\left(f(\lambda)\frac{d\rho_{x^{*}}}{dv_{x^{*}}}\right)\\
 & =\int_{0}^{1}f(\lambda)\left(\frac{1}{|\Lambda|}\sum_{x\in\Lambda}\mathbb{E}_{\neq x}\left[\frac{d\rho_{x}}{dv_{x}}\left(\frac{1}{(\lambda-H{}_{v_{x}=0})_{xx}^{-1}}\right)\right]\right)d\lambda
\end{align*}
and we deduce the first point of the Corollary. The second point follows
similarly 
\begin{align*}
 & \mathbb{E}_{\mu_{1}}(f(\lambda)1_{\phi_{\lambda}\in U})\\
 & =\mathbb{E}_{\mu_{2}}\left(f(\lambda)1_{\phi_{\lambda}\in U}\frac{d\rho_{x^{*}}}{dv_{x^{*}}}\right)\\
 & =\int_{0}^{1}f(\lambda)\left(\frac{1}{|\Lambda|}\sum_{x\in\Lambda}\mathbb{E}_{\neq x}\left[1_{U}\left(\frac{(H_{v_{x}=0}-\lambda)^{-1}1_{x}}{\|(H{}_{v_{x}=0}-\lambda)^{-1}1_{x}\|}\right)\frac{d\rho_{x}}{dv_{x}}\left(\frac{1}{(\lambda-H{}_{v_{x}=0})_{xx}^{-1}}\right)\right]\right)d\lambda\\
 & =\int_{0}^{1}f(\lambda)\mathbb{E}_{\mu_{1}}\left(1_{\phi_{\lambda}\in U}|\lambda\right)\frac{d\nu}{d\lambda}(\lambda)d\lambda
\end{align*}
\end{proof}

\subsection{From the localization of the resolvent to the localization of the
eigenvector}

The resolvent is by far the main tool to study the Anderson model
and most of the computation and estimate is done on this object. Because
Theorem \ref{lem:Radon-Nikodym} gives a direct connection between
the law of the eigenvector and the resolvent one have a very short
proof to deduce ``eigenvector localization'' from the localization
of the resolvent. We introduce the following localization event
\[
U_{\text{loc}}^{\eta}=\{\phi\in\mathbb{S}^{\Lambda-1}:\exists x\in\Lambda,\,\forall y\in\Lambda\,|\phi(y)|\leq\eta(x,y)\}
\]
for some function $\eta:\Lambda^{2}\rightarrow\mathbb{R}$. Such a
function describes a localization phenomena if $\eta(x,y)\rightarrow0$
when $y$ is far away from $x$. For example, in the case $\Lambda\subset\mathbb{Z}^{d}$,
one usually chooses $\eta(x,y)\sim\exp(-c|x-y|)$ or $\eta(x,y)\sim\frac{1}{(1+|x-y|)^{k}}$.
\begin{cor}
\label{cor:Localization}For $\lambda\in[0,1]$,
\[
\mathbb{P}(\phi_{\lambda}\notin U_{\text{loc}}^{\eta}|\lambda\in\text{Spec}(H))\leq\left(\frac{d\nu}{d\lambda}(\lambda)\right)^{-1}\|\rho\|_{\infty}\alpha^{\eta}
\]
where 
\[
\alpha^{\eta}=\sup_{x\in\Lambda}\mathbb{P}_{\neq x}\left[\exists y\in\Lambda:|(H{}_{v_{x}=0}-\lambda)_{xy}^{-1}|>\eta(x,y)\|(H{}_{v_{x}=0}-\lambda)^{-1}1_{x}\|\right]
\]
\end{cor}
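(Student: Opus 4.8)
The plan is to apply the second point of Corollary \ref{cor:Eigenvector-law} with $U=\mathbb{S}^{\Lambda-1}\setminus U_{\text{loc}}^{\eta}$ and then bound each resulting summand by $\alpha^{\eta}$. Writing $\psi_{x}=(H_{v_{x}=0}-\lambda)^{-1}1_{x}/\|(H_{v_{x}=0}-\lambda)^{-1}1_{x}\|$ for the normalized $x$-th column of the resolvent supplied by Lemma \ref{lem:One-to-One}, I would start from
\[
\mathbb{P}(\phi_{\lambda}\notin U_{\text{loc}}^{\eta}\mid\lambda\in\text{Spec}(H))=\frac{1}{|\Lambda|\frac{d\nu}{d\lambda}(\lambda)}\sum_{x\in\Lambda}\mathbb{E}_{\neq x}\!\left[1_{\{\phi\notin U_{\text{loc}}^{\eta}\}}(\psi_{x})\,\frac{d\rho_{x}}{dv_{x}}\!\left(\tfrac{1}{(\lambda-H_{v_{x}=0})_{xx}^{-1}}\right)\right],
\]
and immediately bound the density factor by $\|\rho\|_{\infty}$, which pulls that constant out of every term.

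The key step is the following containment. By definition, $\phi\notin U_{\text{loc}}^{\eta}$ means that \emph{no} site qualifies as a center: for every $x'\in\Lambda$ there is some $y$ with $|\phi(y)|>\eta(x',y)$. In particular this holds for the specific index $x$ appearing in the sum, so
\[
1_{\{\phi\notin U_{\text{loc}}^{\eta}\}}(\psi_{x})\leq 1_{\{\exists y:\,|\psi_{x}(y)|>\eta(x,y)\}}.
\]
Next I would unwind $\psi_{x}(y)$ using Lemma \ref{lem:One-to-One}: its $y$-th entry is $(H_{v_{x}=0}-\lambda)_{yx}^{-1}/\|(H_{v_{x}=0}-\lambda)^{-1}1_{x}\|$, and since $H_{v_{x}=0}$ is symmetric so is its resolvent, giving $(H_{v_{x}=0}-\lambda)_{yx}^{-1}=(H_{v_{x}=0}-\lambda)_{xy}^{-1}$. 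Hence the event $\{\exists y:\,|\psi_{x}(y)|>\eta(x,y)\}$ is exactly the event defining $\alpha^{\eta}$, namely $\{\exists y:\,|(H_{v_{x}=0}-\lambda)_{xy}^{-1}|>\eta(x,y)\|(H_{v_{x}=0}-\lambda)^{-1}1_{x}\|\}$.

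To finish, I would take expectations: $\mathbb{E}_{\neq x}$ of the last indicator is the probability $\mathbb{P}_{\neq x}$ of that resolvent-decay-failure event, which is at most $\alpha^{\eta}$ uniformly in $x$ by the definition of the supremum. Summing over the $|\Lambda|$ sites, the prefactor $\frac{1}{|\Lambda|}$ cancels, yielding the claimed bound $\left(\frac{d\nu}{d\lambda}(\lambda)\right)^{-1}\|\rho\|_{\infty}\alpha^{\eta}$. I expect the only genuinely non-routine point to be the containment of the second paragraph: one must notice that the ``there exists a center'' quantifier in the definition of $U_{\text{loc}}^{\eta}$ becomes, upon negation, a statement valid for \emph{every} candidate center, and in particular for the distinguished site $x$ over which Corollary \ref{cor:Eigenvector-law} sums. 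This alignment between the summation index $x$ and the resolvent column at $x$ is precisely what lets the global non-localization event reduce to per-site resolvent estimates.
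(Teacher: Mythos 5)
Your proof is correct and follows essentially the same route as the paper: apply the second part of Corollary \ref{cor:Eigenvector-law} to the complement of $U_{\text{loc}}^{\eta}$, bound the density by $\|\rho\|_{\infty}$ and the average over $x$ by the supremum, then use exactly the containment you single out (negating the existential quantifier in $U_{\text{loc}}^{\eta}$ and specializing the candidate center to the summation index $x$). Your explicit appeal to the symmetry of the resolvent to identify the $yx$ entry with the $xy$ entry is a point the paper uses silently, so nothing in your argument deviates from, or is missing relative to, the paper's proof.
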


\begin{proof}
[Proof of Corollary \ref{cor:Localization}]From Corollary \ref{cor:Eigenvector-law}
we have 
\[
\mathbb{P}(\phi_{\lambda}\notin U_{\text{loc}}^{\eta}|\lambda\in\text{Spec}(H))\leq\frac{\|\rho\|_{\infty}}{\frac{d\nu}{d\lambda}(\lambda)}\sup_{x\in\Lambda}\mathbb{P}_{\neq x}\left[\frac{(H_{v_{x}=0}-\lambda)^{-1}1_{x}}{\|(H|_{v_{x}=0}-\lambda)^{-1}1_{x}\|}\notin U_{\text{loc}}^{\eta}\right]
\]
and the result follows from 
\[
\left\{ \frac{(H_{v_{x}=0}-\lambda)^{-1}1_{x}}{\|(H{}_{v_{x}=0}-\lambda)^{-1}1_{x}\|}\notin U_{\text{loc}}^{\eta}\right\} \subset\left\{ \exists y\in\Lambda:\frac{|(H_{v_{x}=0}-\lambda)_{xy}^{-1}|}{\|(H{}_{v_{x}=0}-\lambda)^{-1}1_{x}\|}>\eta(x,y)\right\} .
\]
\end{proof}
To prove localization of the eigenvector it then enough prove that
$\alpha^{\eta}$ is small. In the case of random Schrodinger operator
in dimension one usually use the product of $2\times2$ random matrices
of the form $\begin{pmatrix}\lambda-v_{x} & -1\\
1 & 0
\end{pmatrix}$ as in \cite{carmona2012spectral}. Notice that in that case $\lambda\in\mathbb{R}$
is fixed. In the case of random Schrodinger operator in any dimension
with large disorder, it can be done using the so called multi-scaled
analysis \cite{frohlich1983absence} or more directly the Fractional
Moment Method \cite{aizenman1993localization} 
\[
\mathbb{E}(|(H_{v_{x}=0}-\lambda)_{xy}^{-1}|^{s})\leq Ce^{-c|x-y|}
\]
for some $s,c,C>0$ and a Markov estimate.

\subsection{The 1-dimensional random Schrodinger operator}

In \cite{rifkind2018eigenvectors} Rifkind and Virag consider the
scaling limit of the eigenvector of the critical 1-dimensional random
Schrodinger operator 
\[
H=-\Delta+\frac{1}{\sqrt{n}}V
\]
defined on $\ell^{2}([1,n])$ where $\Delta$ is the discrete Laplacian
on this set. The authors proved the following very nice Theorem.
\begin{thm}
\cite{rifkind2018eigenvectors} As $n\rightarrow\infty$ the form
of the eigenvector behaves as
\[
\phi_{\lambda}(\lfloor nt\rfloor)^{2}\sim\frac{1}{C}\exp\left(-\tau(\lambda)\left(\frac{|t-U|}{4}+\frac{B_{t-U}}{\sqrt{2}}\right)\right)\quad\text{for }t\in[0,1]
\]
with $U$ uniform on $[0,1]$, $B$ an independent Brownian motion,
$\tau$ an explicit function and $C$ a normalizing constant.
\end{thm}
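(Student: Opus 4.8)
The plan is to recover this scaling limit from Theorem~\ref{lem:Radon-Nikodym}, whose whole point is that the random center $U$ appearing in the statement is nothing but the uniformly chosen starting point $x^*$ of the construction $\mu_2$. I would carry out every computation under $\mu_2$ and transfer the conclusion back to $\mu_1$ only at the end. Fix $\lambda$ in the bulk of the spectrum and set $x^*=\lfloor nU\rfloor$ with $U$ uniform on $[0,1]$, which is exactly steps~1--2 of $\mu_2$. By Lemma~\ref{lem:One-to-One}, conditionally on $x^*$ and $\lambda$ the eigenvector is explicit,
\[
\phi_\lambda=\frac{(H_{v_{x^*}=0}-\lambda)^{-1}1_{x^*}}{\|(H_{v_{x^*}=0}-\lambda)^{-1}1_{x^*}\|},
\]
so it suffices to understand the Green's function $y\mapsto(H_{v_{x^*}=0}-\lambda)^{-1}_{x^*y}$ of a one-dimensional operator at the \emph{fixed} energy $\lambda$. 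This is the conceptual gain of the method: under $\mu_1$ the quantity $\phi_\lambda(\lfloor nt\rfloor)$ is awkward because $\lambda$ is a random eigenvalue, whereas under $\mu_2$ it is a deterministic transform of a resolvent.

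Next I would express this Green's function through the transfer matrices $A_x=\begin{pmatrix}\lambda-\frac{1}{\sqrt n}v_x & -1\\ 1 & 0\end{pmatrix}$ of the critical model. On $[1,n]$ the Green's function factorizes, for $y$ on either side of $x^*$, into Weyl solutions obtained by propagating $A_x$ rightward and leftward from $x^*$. Consequently $\log|\phi_\lambda(\lfloor nt\rfloor)|$ equals, up to normalization and boundary contributions, the log-norm accumulated by the transfer product between $x^*$ and $\lfloor nt\rfloor$, controlled separately for $t>U$ and $t<U$; this two-sided propagation away from the center is precisely what produces the distance $|t-U|$ in the exponent, and makes $B$ a two-sided Brownian motion indexed by $t-U$. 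Writing the product in Pr\"ufer variables $(r_k,\theta_k)$, I would expand the increment $\log r_{k+1}-\log r_k$ to first and second order in $\frac{1}{\sqrt n}v_x$.

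The analytic core is then a functional central limit theorem for the log-radius. In the critical scaling $\frac{1}{\sqrt n}V$ the Lyapunov exponent is of order $1/n$, so the localization length is comparable to the system size and a nontrivial limit survives over $\sim n|t-U|$ steps. The increments decompose into a martingale part, of order $\frac{1}{\sqrt n}$ per step, whose quadratic variation summed over the relevant steps converges, and a drift part coming from the second-order term together with the phase average of the first-order term, equal in the limit to the Lyapunov exponent. A martingale functional CLT (in the spirit of the one-dimensional analysis of \cite{carmona2012spectral}) then upgrades this to convergence of the whole process $t\mapsto\log|\phi_\lambda(\lfloor nt\rfloor)|^2$ to $-\tau(\lambda)\bigl(\frac{|t-U|}{4}+\frac{B_{t-U}}{\sqrt2}\bigr)$.

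I expect the main obstacle to be the phase averaging. One must show that the Pr\"ufer phase $\theta_k$ equidistributes fast enough (this fails at the band edges and at energies with $k$ a rational multiple of $\pi$, so $\lambda$ has to be restricted to a generic bulk set) to justify replacing the phase-dependent coefficients of the increment by their circular averages, and to pin down the exact constants $\frac14$ and $\frac{1}{\sqrt2}$, whose fixed ratio, independent of $\lambda$, reflects the single-parameter-scaling rigidity of the critical one-dimensional model. A secondary but genuine point is the transfer back to $\mu_1$: since everything is computed under $\mu_2$, I would invoke Theorem~\ref{lem:Radon-Nikodym} and the Remark following it, noting that the density $\frac{d\rho_{x^*}}{dv_{x^*}}(v_{x^*})$ is bounded and, after the change of variables determining $v_{x^*}$, converges, so that the weak limit of the law of $\phi_\lambda$ under $\mu_1$ coincides with the one established under $\mu_2$.
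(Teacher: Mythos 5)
You should first note a structural point: the paper does not prove this theorem at all. It is imported verbatim from \cite{rifkind2018eigenvectors}, and the paper's only contribution here is the interpretive dictionary that follows it --- identifying $U$ with the scaling limit of the uniformly chosen center $x^{*}$ of the $\mu_{2}$ construction, and the exponential profile with the scaling limit of $y\mapsto|(H_{v_{x^{*}}=0}-\lambda)_{x^{*}y}^{-1}|^{2}$. So there is no in-paper proof to compare against; your sketch has to be judged as an outline of the Rifkind--Vir\'ag argument itself, and at that level it is on the right track: random center, explicit eigenvector via Lemma \ref{lem:One-to-One}, Green's function through two-sided transfer-matrix propagation, Pr\"ufer variables, and a diffusive limit of the log-radius is indeed the correct conceptual route, consistent with the dictionary the paper proposes.

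As a proof, however, it has genuine gaps, and they sit exactly where the real work is. First, the analytic core --- the functional CLT for the Pr\"ufer log-radius under the critical scaling $\frac{1}{\sqrt{n}}V$, including the phase equidistribution that justifies circular averaging and pins down the constants $\frac{1}{4}$ and $\frac{1}{\sqrt{2}}$ --- is the entire content of the Rifkind--Vir\'ag analysis, and you only assert it; ``a martingale functional CLT then upgrades this'' is a placeholder, not a step. Second, the transfer from $\mu_{2}$ back to $\mu_{1}$ is not the formality you make it: the Radon--Nikodym density $\frac{d\rho_{x^{*}}}{dv_{x^{*}}}(v_{x^{*}})$ genuinely reweights the law and vanishes whenever the reconstructed $v_{x^{*}}$ falls outside the support of $\rho_{x^{*}}$ (so $\mu_{2}$ charges configurations $\mu_{1}$ never sees); to conclude that the weak limits coincide you need asymptotic independence of $v_{x^{*}}$ from the rescaled profile together with uniform integrability, and since the statement conditions on $\lambda\in\text{Spec}(H)$, the density-of-states normalization $\frac{d\nu}{d\lambda}$ from Corollary \ref{cor:Eigenvector-law} enters as well --- ``the density converges'' is not an argument. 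Third, the conclusion concerns the normalized squared profile $\phi_{\lambda}(\lfloor nt\rfloor)^{2}$ with its normalizing constant $C$, so finite-dimensional convergence of $\log|\phi_{\lambda}|$ does not suffice: you need process-level convergence in a topology strong enough to control $C$, i.e.\ tightness and control of the maximum of the exponent over $[0,1]$, plus the boundary contributions near the endpoints of $[1,n]$ that you wave off as ``boundary contributions.'' None of these is a wrong turn, but each is a missing load-bearing piece.
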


We claim that one could read this Theorem and compare it with our
Theorem \ref{lem:Radon-Nikodym} as follows
\begin{itemize}
\item $U$ is the scaling limit of the center of localization $x^{*}$ chosen
uniform on $[0,n]$
\item $\exp\left(-\tau(\lambda)\left(\frac{|t-U|}{4}+\frac{B_{t-U}}{\sqrt{2}}\right)\right)$
is the scaling limit of $f(y)=|(H_{v_{x}=0}-\lambda)_{x^{*}y}^{-1}|^{2}$
for $y\in[0,n]$
\end{itemize}
A similar generalization in the finite 1D-discrete model were also
proposed in \cite{ducatez2019forward}.

\bibliographystyle{alpha-fr}
\bibliography{Biblio_Anderson}

\end{document}